\newtheorem{theorem}{Theorem}
\newtheorem{lemma}{Lemma}
\newtheorem{corollary}{Corollary}
\newtheorem{conjecture}{Conjecture}
\newtheorem{claim}{Claim}
\newtheorem{subclaim}{Subclaim}
\theoremstyle{definition}
\newtheorem{definition}{Definition}
\title{Reconfiguring colorings of graphs with \\ bounded maximum average degree}
\author{
Carl Feghali\thanks{Computer Science Institute of Charles University, Prague, Czech Republic, email: \texttt{feghali.carl@gmail.com} }
}
\begin{document}
\maketitle

\begin{abstract}
The reconfiguration graph $R_k(G)$ for the $k$-colorings
of a graph~$G$ has as vertex set the set of all possible $k$-colorings
of $G$ and two colorings are adjacent if they differ in the color of exactly
one vertex of $G$. Let $d, k \geq 1$ be integers such that $k \geq d+1$. We prove that for every $\epsilon > 0$ and every graph $G$ with $n$ vertices and maximum average degree $d - \epsilon$, $R_k(G)$ has diameter $O(n(\log n)^{d - 1})$. This significantly strengthens several existing results. 
\end{abstract}

\section{Introduction}

Let $k$ be a positive integer. A \emph{$k$-coloring} of a graph $G$ is a function $f: V(G) \rightarrow \{1, \dots, k\}$ such that $f(u) \not = f(v)$ whenever $(u, v) \in E(G)$.  The reconfiguration graph $R_k(G)$ for the $k$-colorings
of a graph~$G$ has as vertex set the set of all possible $k$-colorings
of $G$ and two colorings are adjacent if they differ in the color of exactly
one vertex of $G$.   

Given a non-negative integer $d$, a graph $G$ is \emph{$d$-degenerate} if every subgraph of $G$ contains a vertex of degree at most $d$. Expressed differently, $G$ is $d$-degenerate if there there exists an ordering $v_1, \dots, v_n$ of the vertices in $G$, called a \emph{$d$-degenerate ordering}, such that each $v_i$ has at most $d$ neighbors $v_j$ with $j < i$. The maximum average degree of a graph $G$ is defined as
\[
\max \bigg\{ \frac{2|E(H)|}{|V(H)|} : H \subseteq G \bigg\}. 
\]
In particular, if $G$ has maximum average degree strictly less than some positive integer $d$, then $G$ is $(d-1)$-degenerate.

Consider the following conjecture of  Cereceda \cite{luisthesis}. 

\begin{conjecture}\label{conj}
For every integers $k$ and $\ell$, $\ell \geq k + 2$, and every $k$-degenerate graph $G$ on $n$ vertices, $R_{\ell}(G)$ has diameter $O(n^2)$. 
\end{conjecture}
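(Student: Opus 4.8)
The plan is to prove the conjecture by reducing every colouring to a single \emph{canonical} target and bounding the number of recolourings this requires. Fix a $k$-degenerate ordering $v_1, \dots, v_n$, so that each $v_i$ has at most $k$ neighbours among $v_1, \dots, v_{i-1}$, and let $c^\star$ be the greedy colouring obtained by processing the vertices in this order, always assigning to $v_i$ the least colour in $\{1, \dots, k+1\}$ avoided by its already-coloured back-neighbours. Since $\ell \geq k+2$, $c^\star$ is a valid $\ell$-colouring and the two \emph{spare} colours $k+1$ and $k+2$ are available as a buffer. It is known, and not hard to show, that $R_\ell(G)$ is connected whenever $\ell \geq k+2$; hence, by the triangle inequality, it suffices to prove that every colouring $\alpha$ can be transformed into $c^\star$ in $O(n^2)$ steps, since any two colourings may then be routed through $c^\star$.

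To reach $c^\star$ from $\alpha$, I would sweep the ordering and place the vertices on their canonical colours one at a time, tracking a potential such as $\Phi(\alpha) = \sum_{i} w_i \, [\alpha(v_i) \neq c^\star(v_i)]$ for weights $w_i$ increasing along the ordering. When correcting $v_i$, its back-neighbours already carry canonical colours, so $c^\star(v_i)$ is consistent with them; one first moves any forward neighbour that currently occupies $c^\star(v_i)$ onto a spare colour, then recolours $v_i$. The hope is to charge each such move so that the total cost telescopes, with the spare colours guaranteeing that a blocking vertex always has somewhere to go.

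The main obstacle --- and the reason the statement remains a conjecture --- is controlling the \emph{cascades} that arise with only two spare colours. A forward neighbour of $v_i$ may have all but one of its colours blocked by its own neighbours, so parking it forces further recolourings, which can disturb vertices already corrected; the prefix is therefore not monotonically frozen, and a naive analysis does not yield a quadratic bound. Obtaining $O(n^2)$ demands a genuinely global amortisation, showing that the total recolouring work summed over all vertices and all induced cascades stays quadratic. Designing a potential that provably amortises these cascades for \emph{arbitrary} $k$-degenerate graphs, using only the two extra colours, is the open core of the problem.

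I would finally observe that the stronger hypothesis studied in this paper sidesteps exactly this obstacle. A graph with maximum average degree $d - \epsilon$ has at least an $\epsilon/d$ fraction of its vertices of degree at most $d-1$, and with $k \geq d+1$ colours each such vertex retains at least two free colours; this large \emph{reducible} set can be recoloured locally and then removed, enabling a divide-and-conquer scheme that replaces the linear factor $n$ of the conjecture by a polylogarithmic one and yields the bound $O(n(\log n)^{d-1})$. Porting this engine back to the purely degenerate setting, or replacing it with a working amortisation, is the route I would pursue toward the full conjecture.
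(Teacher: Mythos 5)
The statement you were asked about is Cereceda's conjecture, which is precisely that: a conjecture. The paper does not prove it and explicitly states that it ``appears difficult to prove or disprove, with the case $k=1$ only being known''; the paper's actual contribution (Theorem~\ref{main}) establishes the conjecture only for restricted classes, namely graphs of maximum average degree $d - \epsilon$ colored with $k \geq d+1$ colors (e.g.\ planar graphs of girth~5), via a decomposition into $(d-1)$-independent sets of linear size and an induction on degree depth. So there is no ``paper proof'' to compare yours against, and your proposal --- as you yourself candidly acknowledge --- is not a proof either. The gap is exactly where you locate it: the sweep toward a canonical greedy coloring $c^\star$ breaks down because parking a blocking forward neighbour on a spare color can force a cascade that un-freezes already-corrected vertices, and no potential function is exhibited (or known) that amortises these cascades to $O(n^2)$ total work for arbitrary $k$-degenerate graphs. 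Identifying the obstacle is not the same as overcoming it; the proposal therefore establishes nothing beyond the known connectivity of $R_\ell(G)$.

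Two smaller points deserve correction. First, the greedy coloring along a $k$-degenerate ordering may use all of the colors $1, \dots, k+1$, so when $\ell = k+2$ you are guaranteed only \emph{one} spare color, not the ``two spare colours $k+1$ and $k+2$'' you invoke as a buffer; any amortisation argument would have to survive with a single spare color in the extremal case, which makes the cascade problem strictly harder than you describe. Second, your closing remark about the bounded maximum average degree setting is broadly in the spirit of the paper, but the paper's engine is not merely that an $\epsilon/d$ fraction of vertices has degree at most $d-1$: it needs (citing an earlier result of the author) a $(d-1)$-\emph{independent} set of size $\Omega(\epsilon h / d^2)$ in every subgraph $H$, which is what makes the recursive peeling terminate in $O(\log n)$ layers and feeds the degree-depth induction of Lemmas~\ref{2deg} and~\ref{l}. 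Porting that machinery back to the purely degenerate setting fails for the reason the paper's corollary makes explicit: without the linear-sized independent layers one only gets degree depth $(k, n)$, i.e.\ $t = n$ rather than $t = O(\log n)$, and Lemma~\ref{l} then yields a bound of order $n^{k+1}$, far from quadratic. The conjecture remains open, and your write-up should be read as a (reasonable) research plan, not a proof.
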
 

The conjecture appears difficult to prove or disprove, with the case $k = 1$ only being known despite some efforts; for a recent exposition on the conjecture and the results surrounding it see \cite{eiben, heinrich}. The most important breakthrough is Theorem 1 in \cite{heinrich} due to Bousquet and Heinrich, which addresses a number of cases for Conjecture \ref{conj}, generalising several existing results. For instance, it is shown in \cite{heinrich} that there exists a constant $c > 0$ independent of $k$ such that $R_{\ell}(G)$ has diameter at most $(cn)^{k+1}$ for every $\ell \geq k + 2$.  

The purpose of this note is to prove the following theorem.

\begin{theorem}\label{main}
Let $d, k \geq 1$ be integers such that $k \geq d+1$. For every $\epsilon > 0$ and every graph $G$ with $n$ vertices and maximum average degree $d - \epsilon$, $R_k(G)$ has diameter $O(n(\log n)^{d - 1})$.
\end{theorem}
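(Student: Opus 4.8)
The plan is to prove, by induction on $d$, that any $k$-coloring of $G$ can be transformed into a fixed canonical coloring in $O(n(\log n)^{d-1})$ steps; the diameter bound then follows since the distance between any two colorings is at most twice this quantity. The base case $d=1$ is immediate: a graph of maximum average degree less than $1$ has no edges, so with $k\ge 2$ colors each vertex may be set to its canonical color independently, in at most $n$ steps, which is $O(n(\log n)^{0})$. For the inductive step I would first record the one structural consequence of the maximum average degree hypothesis that drives everything: since $\sum_v \deg(v)\le (d-\epsilon)n$ and this inequality is inherited by every subgraph, in $G$ and in each of its subgraphs at least an $\epsilon/d$ fraction of the vertices have degree at most $d-1$, and each such vertex has at least $k-(d-1)\ge 2$ colors free under any proper coloring. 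Crucially, the hypothesis $k\ge d+1$ leaves exactly one spare color: reserving it leaves $d=(d-1)+1$ colors available, which is precisely the number required to apply the inductive hypothesis to subinstances of maximum average degree less than $d-1$.

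The heart of the argument is to organize the transformation into $O(\log n)$ phases, each of which invokes the $(d-1)$-level machinery once at a cost of $O(n(\log n)^{d-2})$, so that the total obeys a recursion such as $f_d(n)=O(\log n)\cdot f_{d-1}(n)$, which unwinds from the base value $f_1(n)=O(n)$ to $O(n(\log n)^{d-1})$; a divide-and-conquer recursion of the form $T(n)=2T(n/2)+O(n(\log n)^{d-2})$ would yield the same bound. In each phase I would use the reserved color together with the abundant supply of low-degree vertices to isolate a part of the graph on which the residual recoloring problem genuinely has maximum average degree less than $d-1$, so that the inductive hypothesis applies with the $d$ remaining colors, while arranging that the phase halves a suitable potential (for instance the number of vertices still disagreeing with the canonical coloring, or the size of a range of colors still in play). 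After $O(\log n)$ phases the potential is exhausted and the canonical coloring is reached.

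I expect the main obstacle to be precisely the step of exhibiting, within a phase, a subinstance of maximum average degree less than $d-1$ to which the inductive hypothesis can be applied. Unlike the forest case $d=2$, where removing a centroid disconnects the graph into balanced pieces that recolor independently, a graph of bounded maximum average degree can be an expander and so need not admit any small separator; one therefore cannot make progress by cutting the graph into non-interacting pieces. The reduction must instead be carried out through the degeneracy ordering and the reserved color, treating the colors of the already-fixed boundary as forbidden colors for their few neighbors in the interior and verifying that this list-type restriction still leaves each low-degree vertex enough freedom to be recolored. Controlling how these boundary constraints propagate, and checking that each phase really drives the chosen potential down by a constant factor without undoing earlier progress, is where the bulk of the careful work, and the genuine difficulty, will lie.
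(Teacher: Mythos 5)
There is a genuine gap, and it sits exactly where you predicted it would: your proposal never specifies how a phase produces a subinstance to which the inductive hypothesis applies, and the particular reduction you propose --- exhibiting a residual instance of maximum average degree less than $d-1$ --- cannot be made to work. Every residual instance arising during a recoloring procedure is a subgraph of $G$, and subgraphs do not have smaller maximum average degree: a $(d-1)$-regular expander has maximum average degree $d-1 \leq d-\epsilon$, yet every subgraph on a $(1-o(1))$-fraction of its vertices still has average degree $d-1-o(1)$, so the inductive hypothesis at level $d-1$ could at best be invoked with an $\epsilon'$ that tends to $0$ with $n$ --- and the implied constants (in particular the number of layers, hence the exponent base) blow up as $\epsilon' \to 0$. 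Nor does deleting the low-degree vertices you identify help: removing an independent set of vertices of degree at most $d-1$ leaves any dense subgraph of $G$ untouched, so the maximum average degree simply does not go down. An induction whose parameter is literally $d$, with ``mad $< d-1$'' as the hypothesis, therefore has no valid inductive step, and the list-coloring reformulation you gesture at in the last paragraph is not carried out (and is not what is needed).

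The paper's resolution is to change the induction parameter. One first uses your $\epsilon/d$-fraction observation (sharpened, via \cite{feghali}, to a \emph{$(d-1)$-independent} set of size $\geq \epsilon h/d^2$ in every subgraph $H$) to fix, once and for all, a partition $\{V_1,\dots,V_t\}$ of $V(G)$ with $t = O(\log n)$ in which each $V_i$ is a $(d-1)$-independent set of $G \setminus (V_1 \cup \dots \cup V_{i-1})$; the paper calls this \emph{degree depth} $(d-1,t)$. The induction is then on the upward-degree parameter $s$ of this fixed partition, jointly with the number of colors $s+2$, never on maximum average degree. In the inductive step one first eliminates the reserved color $s+2$ from both colorings (this is the paper's Lemma \ref{2deg}, itself a nontrivial induction on $s$ carried out layer by layer, and it is needed because your spare color does not start out unused); one then greedily recolors vertices to color $s+2$ along an ordering compatible with the layers. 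The resulting color class $S$ is determined by the ordering alone, independently of the starting coloring, so $\alpha$ and $\beta$ are both driven to agree on $S$ --- no canonical coloring is needed --- and every vertex of $H = G - S$ has a neighbor in $S$ above it, so $H$ has degree depth $(s-1,t)$ \emph{with respect to the same layers}. Recursing on $H$ with colors $1,\dots,s+1$ gives precisely your recursion $f_s = O(t)\cdot f_{s-1}$ and hence the bound $O(n(\log n)^{d-1})$. So your structural observation, color-reservation idea, and recursion shape all survive, but the object that shrinks under the recursion is degree depth relative to a pre-computed $O(\log n)$-layer partition, not maximum average degree; that substitution is the missing core of the proof.
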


Theorem \ref{main} is a generalisation of \cite[Theorem 2]{bousquet11}. In particular, it has the following immediate consequences. By Euler's formula, planar graphs, triangle-free planar graphs and planar graphs of girth 5 have maximum average degrees strictly less than, respectively, $6$, $4$ and $7/2$. 
Hence Theorem \ref{main} affirms (and is stronger than) Conjecture \ref{conj} for planar graphs of girth 5 but is one color short of confirming the conjecture for planar graphs and triangle-free planar graphs. 
It nevertheless generalises some best known existing results. More precisely, our theorem subsumes both \cite[Corollary 5]{bousquet11} and  \cite[Theorem 1]{heinrich} restricted to planar graphs, as well as \cite[Corollary 7]{bousquet11} and \cite[Corollary 1]{feghaliplanar}. 

\section{The proof}

In this section, we prove Theorem \ref{main}. Our approach is essentially a combination of the ones found in \cite{heinrich, feghali}. We begin with some definitions.

\begin{definition} Given a graph $G$, a coloring $\alpha$ of $G$ and a subgraph $H$ of $G$, let $\alpha^H$ denote the restriction of $\alpha$ to $H$.   \end{definition} 

\begin{definition}Let $G$ be a graph, and let $k$ be a nonnegative integer. A subset $S \subseteq V(G)$ is a \emph{$k$-independent set of $G$} if $S$ is an independent set of $G$ and every vertex of $S$ has degree at most $k$ in $G$.
\end{definition}
 
\begin{definition}\label{def} For integers $s\geq 0$ and $t \geq 1$, a graph $G$ is said to have \emph{degree depth $(s, t)$} if there exists a partition $\{V_1, \dots, V_t\}$ of $V(G)$, called an \emph{$s$-degree partition}, such that $V_1$ is an $s$-independent set of $G$ and, for $i \in \{2, \dots, t\}$, $V_i$ is an $s$-independent set of $G \setminus \bigcup_{j = 1}^{i - 1} V_j$.  
\end{definition}

In what follows, let $G$ be a graph of degree depth $(s, t)$ and with $s$-degree partition $\{V_1, \dots V_t\}$.

\begin{definition}\label{def4} An ordering $v_n, \dots, v_1$ of $V(G)$ is said to be \emph{embedded in} $\{V_1, \dots, V_t\}$ if,  for every pair $(v_i, v_j) \in V(G) \times V(G)$ such that $v_i \in V_p$ and $v_j \in V_q$, $i < j$ implies $p \leq q$. 
\end{definition}

Notice that the ordering in Definition \ref{def4}  is an $s$-degenerate ordering of $G$. 

  If $H$ is a subgraph of $G$ such that $V(H) = \bigcup_{j = 1}^h V_j$ for some index $h \in \{1, \dots, t\}$, then $H$ is called a \emph{layered} subgraph of $G$, and $h$ is its \emph{boundary}. 
   
 In the next definition, we shall slightly abuse Definition \ref{def}.
 
 \begin{definition} If $H$ is a layered subgraph of $G$ with boundary $h$, then we say that $H$ has degree depth $(s', t)$ if, for each index $j \in \{1, \dots, h\}$, each $v \in V(H) \cap V_j$  has at most $s'$ neighbors in $\bigcup_{i = j + 1}^t V_i$.  
 \end{definition}

 We have the following crucial lemma. 
 
 \begin{lemma}\label{2deg}
 Let $s \geq 0$ and $ t \geq 1$ be  integers, let $G$ be a graph with degree depth $(s, t)$, and let $F$ be a layered subgraph of $G$. Any $(s + 2)$-coloring of $G$ can be recolored, using only colors $1, \dots, s + 2$, to some coloring of $G$ in which color $s+2$ is not used in $F$ by $O((s + 1)2^{s - 1}t^{s})$ recolorings per vertex of $F$ and by not recoloring any vertex of $G \setminus F$.  
 \end{lemma}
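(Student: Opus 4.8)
The plan is to prove this by induction on the boundary $h$ of the layered subgraph $F$, exploiting the degree-depth structure to process one layer $V_j$ at a time while controlling the per-vertex recoloring cost. Let me think through the natural strategy.

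We want to recolor so that color $s+2$ is not used in $F$. The natural base case is when $F$ has boundary $h=1$, so $V(F) = V_1$ is an $s$-independent set. Each vertex of $V_1 \cap V(F)$ has degree at most $s$ in $G$, hence sees at most $s$ distinct colors among its neighbors. With $s+2$ colors available, each such vertex can be recolored away from color $s+2$ in a single step to some color in $\{1,\dots,s+1\}$ (there is always a free color since $s$ neighbors forbid at most $s$ colors, leaving at least $2$ of the $s+1$ non-$(s+2)$ colors available). This costs $O(1)$ recolorings per vertex, consistent with the bound.

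For the inductive step, I would consider $F$ with boundary $h$ and look at the "top" layer $V_h \cap V(F)$ together with the layered subgraph $F'$ of boundary $h-1$. The idea is to use the degree-depth bound: a vertex $v \in V_j$ has at most $s$ neighbors in $\bigcup_{i>j} V_i$, so when we try to recolor the lower layers we only have a bounded number of "upward" constraints. The key mechanism, following the approach of Bousquet--Heinrich and Feghali, is a divide-and-conquer on each layer: to free color $s+2$ on a set of vertices, we may first need to temporarily push some neighbors to color $s+2$ to make room, then clear them recursively. This is where the $2^{s-1}$ and the $t^s$ factors should emerge — the $2^{s-1}$ from the branching over the (at most $s$) upward neighbors, and the $t^s$ from recursing through the $t$ layers with one factor of $t$ (or of the recursion depth, which is $O(\log n)$ or $O(t)$) accumulated per degree level. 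I would set up a recurrence $T(s,t) \le c\,t \cdot T(s-1, t) + O(1)$ or similar, capturing that eliminating one unit of degree freedom costs a factor proportional to $t$ while reducing $s$ by one, and solve it to get $O((s+1)2^{s-1}t^s)$.

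The main obstacle I anticipate is managing the interaction between layers without recoloring anything outside $F$, while keeping the per-vertex count uniform. Concretely, to clear color $s+2$ from a vertex $v$ in layer $j$, its forbidden colors come from neighbors in both higher and lower layers; the higher-layer neighbors (at most $s$ of them by the degree-depth hypothesis) are the troublesome ones, since we cannot freely recolor them if they lie outside $F$ or if doing so reintroduces conflicts. The delicate part is the recursive scheme that, for each of these up-to-$s$ upward neighbors, temporarily blocks a color and then recursively applies the lemma to a subgraph with one fewer degree of freedom — this is precisely what forces the $2^{s-1}$ branching factor and must be arranged so that each vertex of $F$ is touched only $O((s+1)2^{s-1}t^s)$ times in total. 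Verifying that the recursion respects the constraint of never recoloring $G \setminus F$, and that the layered/degree-depth structure is preserved when passing to subproblems, will be the crux of the argument.
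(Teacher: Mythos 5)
Your proposal is an outline rather than a proof: the base case is fine (a vertex of $V_1$ has degree at most $s$ in all of $G$, so it can always escape color $s+2$ in one step), but the inductive step --- which is the entire content of the lemma --- is never carried out. You describe the hoped-for shape of a recurrence $T(s,t) \le c\,t\cdot T(s-1,t) + O(1)$ and then explicitly defer the mechanism ("the delicate part\ldots will be the crux of the argument"). The missing idea is precisely the one the paper supplies, and it is not a routine verification. Concretely: the paper locates the \emph{lowest} layer $V_h$ containing color $s+2$, covers its color-$(s+2)$ vertices $W$ by classes $W_a$ ($a \in \{1,\dots,s+1\}$) according to a color $a$ unused by their upward neighbors (possible since each such vertex has at most $s$ upward neighbors, and these upward neighbors are \emph{never recolored}); then, to clear color $a$ from the lower layers $U$ so that $W_a$ can be recolored to $a$, it greedily pushes vertices of $U$ to color $s+2$ in the embedded order. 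The crucial observation is the subclaim: every vertex of $U$ that could not be pushed has an upward neighbor that was pushed, so the residual graph $H$ on the unpushed vertices has degree depth $(s'-1,t)$, and the recursion can be applied to $H$ \emph{with one fewer color} (colors $1,\dots,s+1$), which is also what makes adjacencies with the pushed (color-$(s+2)$) vertices harmless. Finally the roles of $a$ and $s+2$ are swapped to clean $U$ of color $s+2$. Your proposal contains no substitute for this degree-reduction step: "temporarily push some neighbors to color $s+2$ to make room, then clear them recursively" gestures at it, but without the subclaim that pushing strictly decreases the degree depth of what remains, and without dropping a color in the recursion, there is no quantity that decreases and hence no induction.

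A secondary sign that the mechanism is not in hand: your accounting attributes the factor $2^{s-1}$ to "branching over the (at most $s$) upward neighbors." In the actual argument the factor of $2$ per level of the recursion comes from running the degree-reduced recursion twice (once to purge color $a$ from the residual graph, once, with $a$ and $s+2$ interchanged, to purge color $s+2$ from the lower layers), while the factor $s+1$ comes from iterating over the target colors $a$, and the factor $t$ from the outer loop over layers. Also, your induction is organized on the boundary $h$ of $F$, whereas the correct structure is a two-level one: an outer iteration over the lowest layer still containing color $s+2$ (which increases monotonically), and an inner induction on the pair (degree depth, number of colors). As written, your induction on $h$ has no inductive hypothesis strong enough to handle the interaction with lower layers that you yourself identify as the obstacle.
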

 
 \begin{proof}
Let $\{V_1, \dots, V_t\}$ be an $s$-degree partition of $G$, and let $V(F) = V_1 \cup \dots \cup V_b$, where $b \geq 1$ is the boundary of $F$. Let $v_m, \dots, v_1$ be an ordering of $V(F)$ that is embedded in $\{V_1, \dots, V_b\}$.  Let $\alpha$ be an $(s + 2)$-coloring of $G$, and let $h \in \{1, \dots, b\}$ be the \emph{smallest} index such that $V_h$ contains a vertex with color $s + 2$ under $\alpha$. 
 Let $W$ denote the subset of vertices of $V_h$ with color $s + 2$. For each color $a \in \{1, \dots, s + 1\}$, define $W_a$ to be the subset of $W$ whose vertices have no neighbor earlier in the ordering with color $a$. More formally, $$W_a = \{v_i \in W: \alpha(v_j) \not= a \text{ for all neighbors } v_j \text { of } v_i \text{ with } j > i\},$$ 
 and notice that $$W = \bigcup_{i = 1}^{s + 1} W_i.$$

 \begin{claim}\label{claim3}
 Let $U = \bigcup_{i=1}^{h - 1}V_i$. For each $a \in \{1, \dots, s + 1\}$, there is a sequence of recolorings in $R_{s + 2}(G)$ such that 
 \begin{itemize}
 \item each vertex of $U$ is recolored $O((2t)^{s - 1})$ times,
 \item each vertex of $W_a$ is recolored at most once,
 \item no vertex of $V(G) \setminus (U \cup W_a)$ is recolored, and
 \item at the end of the sequence, no vertex of $U \cup W_a$ has color $s + 2$. 
 \end{itemize}
 \end{claim}
 
Let us first show how to use the claim to prove the lemma. Applying the sequence described in Claim \ref{claim3} for each $a \in \{1, \dots, s + 1\}$, we obtain a coloring in which color $s + 2$ is not used in $U \cup V_h$ by $O((s + 1)(2t)^{s - 1})$ recolorings.  The smallest index $h'$ such that  $V_{h'}$ contains a vertex with color $s + 2$ has now increased; hence at most $b \leq t$ such repetitions are needed to obtain a coloring in which color $s + 2$ is not used in $F$,  so each vertex is recolored $O((s + 1)2^{s - 1}t^{s})$ times and the lemma follows.  It remains to prove the claim.

\begin{proof}[Proof of Claim \ref{claim3}] Let $F^* = F[U \cup W_a]$ and note that $F^*$ has degree depth $(s', t)$ for some $s' \in \{0, \dots, s\}$. We are going to apply induction on $s'$. 
  The base case $s' = 0$ is trivial (simply immediately recolor the vertices of $W_a$) so we can assume that $s \geq s' > 0$ and that Claim \ref{claim3} and hence, by the observation following the statement of Claim \ref{claim3}, also the lemma holds for each subgraph $K$ of $G$ and layered subgraph of $K$ of degree depth $(s' - 1, t)$. 
  
 In the inductive step, we are in fact going to establish the claim for the pair $((s', t), s + 2)$, where the first term of the pair corresponds to the degree depth of $F^*$ and the second term to the number of colors, assuming its validity for the pair $((s' - 1, t), s + 1)$. Let $u_k, \dots, u_1$ be an ordering of the vertices of $U$ that is embedded in $\{V_1, \dots, V_{h - 1}\}$.  We first try to recolor immediately, whenever possible, each vertex of $U$ to color $s + 2$ starting with $u_k$ and moving forward towards $u_1$. 
Let $\gamma$ denote the resulting coloring, let $S = \{\gamma(v) = s + 2: v \in V(G)\}$ and let $H = G[U \setminus S]$. 

\begin{subclaim}
$H$ has degree depth $(s'-1, t)$.
\end{subclaim}

\begin{proof}[Proof of Subclaim.]By our choice of $h$, each vertex $u \in U \cap V_p$ for some $p \in \{1, \dots, h - 1\}$ either satisfies $\gamma(u) = s + 2$ or has a neighbor $u' \in V_q$ for some $q \in \{p + 1, \dots, t\}$ such that $\gamma(u') = s + 2$. This implies the subclaim.
\end{proof} 

By the above subclaim, we can apply the induction hypothesis to the pair $((s' -1, t), s + 1)$ with $H$ and $G \setminus S$ playing the roles of $F$ and $G$, respectively.  This gives a sequence of recolorings (that uses only colors $1, \dots, s + 1$) from  $\gamma^H$ to some coloring $\zeta^H$ of $H$ such that
\begin{itemize}
 \item color $a$ is not used in $\zeta^H$,
 \item  the number of recolorings per vertex of $H$ is $O(2^{s - 2} t^{s - 1})$, and
 \item no vertex of $G \setminus (S \cup H)$ is recolored. 
 \end{itemize}
Clearly, this sequence of recolorings vacuously translates to a sequence of recolorings in $G$ from $\alpha$ to coloring $\zeta$ satisfying $\zeta(v) = \zeta^H(v)$ if $v \in V(H)$ and $\zeta(v) = \gamma(v)$ if $v \in V(G) \setminus V(H)$. From $\zeta$, we can now immediately recolor each vertex of $W_a$ to color~$a$. It remains to recolor each vertex of $U$ to a color distinct from $s + 2$. To do so, we simply repeat the above steps with the roles of $a$ and $s + 2$ interchanged. This takes again $O(2^{s - 2} t^{s - 1})$ recolorings per vertex of $H$. Hence each vertex of $H$ is recolored in total $O((2t)^{s-1})$ times. This proves the claim and hence completes the proof of the lemma. \end{proof}

   \end{proof}

We can prove our final lemma, from which Theorem \ref{main} follows easily.

\begin{lemma}\label{l}
 Let $s \geq 0$ and $t \geq 1$ be integers, and let $G$ be a graph with $n$ vertices and degree depth $(s, t)$. Then $R_{s + 2}(G)$ has diameter $O(ns(2t)^{s})$.
\end{lemma}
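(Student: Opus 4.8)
The plan is to prove Lemma~\ref{l} by reduction to Lemma~\ref{2deg}, using the latter as a subroutine that eliminates the top color on a layered subgraph, and then recursing on the remaining colors and the remaining layers. First I would recall that $G$ itself is a layered subgraph of $G$ (with boundary $t$), so Lemma~\ref{2deg} applies directly to $F = G$: any $(s+2)$-coloring can be recolored, using only colors $1, \dots, s+2$, to a coloring in which color $s+2$ is not used anywhere in $G$, at a cost of $O((s+1)2^{s-1}t^s)$ recolorings per vertex. Once color $s+2$ has been purged from the whole graph, every vertex carries a color in $\{1, \dots, s+1\}$, and the graph (still of degree depth $(s,t)$, hence $s$-degenerate) is now colored with one fewer color.

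The key idea is that to bound the diameter it suffices to drive an arbitrary coloring to a single fixed canonical coloring, since the diameter is then at most twice this bound (concatenate the path from $\alpha$ to the canonical coloring with the reverse of the path from $\beta$). So I would fix a canonical target: take the $s$-degenerate ordering embedded in $\{V_1, \dots, V_t\}$ and greedily color $G$ with colors $\{1, \dots, s+1\}$, obtaining some reference coloring $\rho$ that never uses color $s+2$. The main step is then to recolor an arbitrary $(s+2)$-coloring $\alpha$ to $\rho$. After the single application of Lemma~\ref{2deg} above, $\alpha$ has been turned into an $(s+1)$-coloring. From here I expect to iterate: with color $s+2$ gone, one should be able to treat the resulting configuration as a coloring problem with fewer available colors and push toward $\rho$. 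The cleanest route is to observe that once we are down to $s+1$ colors on an $s$-degenerate graph, we can reach $\rho$ directly — for instance by recoloring vertices in reverse degenerate order, each vertex having at most $s$ earlier-placed neighbors and hence at least one free color among $\{1, \dots, s+1\}$, so each vertex is recolored $O(1)$ times to match $\rho$.

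Summing the costs gives the claimed bound: the expensive phase is the single invocation of Lemma~\ref{2deg}, costing $O((s+1)2^{s-1}t^s)$ recolorings per vertex, and multiplying by the $n$ vertices yields $O(ns(2t)^s)$ overall (absorbing $(s+1)2^{s-1}$ into $s 2^{s}$ up to constants). The subsequent matching-to-$\rho$ phase contributes only $O(n)$ recolorings, which is lower order. Doubling for the diameter does not change the asymptotics. The main obstacle I anticipate is making precise the transition from ``color $s+2$ eliminated'' to ``canonical $(s+1)$-coloring reached'' without smuggling color $s+2$ back in: one must verify that the final greedy-matching phase can be carried out entirely within $\{1, \dots, s+1\}$, so that Lemma~\ref{2deg}'s guarantee (that $s+2$ stays unused once purged) is respected and the per-vertex recoloring count in this phase is genuinely bounded by a constant. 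Once that is checked, the diameter bound follows by the triangle-inequality argument through $\rho$.
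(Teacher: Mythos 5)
Your first step (applying Lemma~\ref{2deg} with $F = G$ to purge color $s+2$) matches the paper, and the ``meet at a common coloring'' framing is fine in principle. But the final phase of your plan contains a genuine and fatal gap: the claim that, once only colors $\{1, \dots, s+1\}$ are in play on an $s$-degenerate graph, you can reach the canonical greedy coloring $\rho$ with $O(1)$ recolorings per vertex while staying inside $\{1, \dots, s+1\}$. This is false, and it is exactly the hard part of the problem. Take $G = K_{s+1}$, which is $s$-degenerate: every proper coloring with $s+1$ colors is frozen (no vertex can be recolored at all), so $R_{s+1}(K_{s+1})$ is edgeless and two distinct colorings can never be connected within $s+1$ colors. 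Your local argument fails for a concrete reason: processing in reverse degenerate order guarantees each vertex has \emph{some} free color with respect to its at most $s$ back-neighbors, but (a) that free color need not be the target color $\rho(v)$, and (b) the forward-neighbors of $v$ --- of which there can be arbitrarily many --- may currently occupy $\rho(v)$, so installing $\rho(v)$ would require recoloring them first, which cascades with no bounded cost.

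What the paper does instead, and what your proposal is missing, is a recursion that keeps the ``two extra colors'' invariant. After purging color $s+2$ from both $\alpha$ and $\beta$, the paper runs a greedy pass (in the ordering embedded in the partition) that recolors vertices to $s+2$ whenever possible. Since neither intermediate coloring uses $s+2$, this greedy pass selects the \emph{same} maximal independent set $S$ regardless of the coloring, so both colorings now agree on $S$. Every vertex outside $S$ has a later-ordered neighbor in $S$, so $H = G - S$ has degree depth $(s-1, t)$, and one recurses on $H$ with colors $\{1, \dots, s+1\}$: degeneracy and palette both drop by one, so the induction hypothesis applies, and the $s+2$-colored set $S$ never interferes. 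This dimension-reduction induction on $s$ is the key idea; without it (or some substitute), the transition from ``color $s+2$ eliminated'' to ``target coloring reached'' cannot be bridged, as the $K_{s+1}$ example shows.
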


\begin{proof}
As before, we proceed by induction on the pair $((s, t), s + 2)$, where the first term corresponds to the degree depth of $G$ and the second term to the number of colors. The base case $s = 0$ is trivial, so we can assume that $s > 0$ and that the lemma holds for the pair $((s-1, t), s + 1)$.  

Let $\alpha$ and $\beta$ be two $(s + 2)$-colorings of $G$, and let $\{V_1, \dots, V_t\}$ be an $s$-degree partition of $G$. It suffices to show that we can recolor $\alpha$ to~$\beta$ by $O(s(2t)^{s})$ recolorings per vertex. By Lemma \ref{2deg} with $F = G$, we can recolor $\alpha$ to some $(s + 1)$-coloring $\alpha_1$ of $G$ and $\beta$ to some $(s + 1)$-coloring $\beta_1$ of $G$ by $O(s2^st^{s})$ recolorings per vertex. 

Let $v_n, \dots, v_1$ be an ordering of $V(G)$ that is embedded in $\{V_1, \dots, V_t\}$. We recolor $\alpha_1$ and $\beta_1$ to new colorings $\alpha_2$ and $\beta_2$ of $G$ by trying to recolor, from $\alpha_1$ and $\beta_1$,  immediately whenever possible  each vertex of $G$ to color $s + 2$ starting with $v_n$ and moving forward towards $v_1$.
Let $S = \{v \in V(G): \alpha_2(v) = s + 2 (=\beta_2(v)) \}$. As before, the graph $H = G - S$ has degree depth $(s - 1, t)$. So we can apply our induction hypothesis to recolor $\alpha_2^H$ to $\beta_2^H$ by $O((s-1)(2t)^{s-1})$ recolorings per vertex using only colors $1, \dots, s+1$  (as this sequence of recolorings does not use color $s + 2$, we need not worry about adjacencies between $H$ and $S$). This completes the proof. 
\end{proof}

 \begin{proof}[Proof of Theorem \ref{main}]
 Let $H$ be any subgraph of $G$, and let $h = |V(H)|$. An independent set $I$ of $H$ is said to be \emph{special} if $I$ is  a $(d-1)$-independent set of $H$ and $|I| \geq \epsilon h / d^2$.
 It was shown in \cite{feghali} that $H$ contains a special independent set. This means that there is a partition $\{I_1, I_2, \dots, I_{\ell}\}$ of $V(G)$ such that $I_1$ is a special independent set of $G$ and, for $i \in \{2, \dots, \ell\}$, $I_i$ is a special independent set of $G \setminus \Big(\bigcup_{j = 1}^{i - 1} I_j\Big)$.  Thus $G$ has degree depth $(d - 1, \ell)$.  As $\ell = f(n)$ satisfies the recurrence $$f(n) \leq f\Big(n - \frac{\epsilon n}{d^2}\Big) + 1,$$  it follows that $\ell = O(\log n)$, by the master theorem. The theorem now follows by Lemma \ref{l} with $t = \log n$ and $s = d - 1$. 
 \end{proof} 
 
 Similarly, we can slightly improve on the constant $c$ in the aforementioned main result from \cite{heinrich}.
 
 \begin{corollary}
Let $k, n \geq 1$ be integers, and let $G$ be a $k$-degenerate graph with $n$ vertices. Then $R_{k + 2}(G)$ has diameter $O(2^kn^{k + 1})$. 
 \end{corollary}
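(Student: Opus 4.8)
The plan is to deduce this corollary from Lemma \ref{l} by
exhibiting, for any $k$-degenerate graph $G$, a degree partition whose
parameters $(s,t)$ feed directly into the bound $O(ns(2t)^s)$. The
guiding observation is that degeneracy is precisely what guarantees a
controlled degree partition: since every subgraph of a $k$-degenerate
graph has minimum degree at most $k$, one can repeatedly peel off
low-degree independent sets. Concretely, I would set $s = k$ and show
that $G$ has degree depth $(k, t)$ for a suitable $t$, so that
$R_{k+2}(G)$ has diameter $O(nk(2t)^k)$.

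First I would establish the partition. Given a $k$-degenerate graph
$H$, its vertices of degree at most $k$ form a (nonempty) $k$-independent
set once we also insist on independence; more carefully, I would select
a maximal independent set $I$ among the vertices of degree at most $k$ in
$H$. Every vertex of degree at most $k$ not in $I$ has a neighbor in $I$,
so deleting $I$ strictly reduces the vertex set, and iterating yields a
partition $\{V_1, \dots, V_t\}$ with each $V_i$ a $k$-independent set of
$G \setminus \bigcup_{j<i} V_j$, i.e. an $s$-degree partition with
$s = k$ in the sense of Definition \ref{def}. The remaining task is to
bound the number of parts $t$.

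The key quantitative step is to argue that each peeled independent set
removes a constant fraction of the vertices, giving $t = O(\log n)$ and
hence, via Lemma \ref{l} with $s=k$ and $t = O(\log n)$, a diameter of
$O(nk(2\log n)^k)$. However, to land on the stated bound $O(2^k n^{k+1})$
rather than a polylogarithmic one, I suspect the intended route is
coarser: one does not need a balanced partition, only $t \le n$ parts
(taking each $V_i$ to be a single low-degree vertex, or any
$k$-independent set, still yields degree depth $(k, t)$ with $t \le n$).
Substituting $t = n$ into Lemma \ref{l} gives diameter
$O(nk(2n)^k) = O(k 2^k n^{k+1})$, which absorbing $k$ into the constant
$c$ of the original $(cn)^{k+1}$ bound yields the advertised
$O(2^k n^{k+1})$.

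The main obstacle I anticipate is reconciling the two readings of the
statement: Lemma \ref{l} delivers $O(ns(2t)^s)$, and with $s = k$,
$t = n$ this is $O(n k (2n)^k) = O(k 2^k n^{k+1})$, so the clean
$O(2^k n^{k+1})$ requires either hiding the linear factor of $k$ inside
the implied constant (legitimate since $k$ is treated as fixed for the
degeneracy parameter) or a slightly sharper accounting in Lemma
\ref{l}. I would therefore spell out explicitly that every $k$-degenerate
graph trivially has degree depth $(k, n)$ — each vertex can be its own
layer following a $k$-degenerate ordering — and then invoke Lemma
\ref{l} verbatim, noting that the factor $s = k$ is subsumed by the
exponential term $2^k$ for the purposes of the $O$-notation.
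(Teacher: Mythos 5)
Your proposal is correct and matches the paper's proof, which likewise observes that every $k$-degenerate graph on $n$ vertices has degree depth $(k, n)$ (single-vertex layers in reverse degenerate order) and then invokes Lemma \ref{l} with $s = k$, $t = n$. Your discussion of the extra factor $s = k$ from the bound $O(ns(2t)^s)$ is a fair point that the paper glosses over, and your resolution (it is absorbed into the implied constant, as $k$ is fixed) is the right one.
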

 
 \begin{proof}
Noting that every $k$-degenerate graph with $n$ vertices has degree depth $(k, n)$, the corollary immediately follows from Lemma \ref{l}. 
 \end{proof}

 \section*{Acknowledgements}
The author is indebted to the referee for spotting several inaccuracies and for many suggestions that significantly improved the presentation of the paper. 
This work was partially supported by the Research Council of Norway via the project CLASSIS  grant
number 249994 and by grant 19-21082S of the Czech Science Foundation.

\bibliography{bibliography}{}
\bibliographystyle{abbrv}
 
\end{document}